     \def\section{\@startsection{section}{1}%
     \z@{.7\linespacing\@plus\linespacing}{.5\linespacing}%
     {\bfseries
     \centering
     }}
     \def\@secnumfont{\bfseries}
\crefname{enumi}{}{}
\crefname{equation}{}{}
\newcommand{\R}{\mathbb R}
\newcommand{\RR}{\mathbb R}
\newcommand{\E}{\mathbb E}
\newcommand{\HH}{\mathfrak H}
\newtheorem{theorem}{Theorem}[section]
\newtheorem{lemma}[theorem]{Lemma}
\newtheorem{proposition}[theorem]{Proposition}
\theoremstyle{definition}
\newtheorem{definition}[theorem]{Definition}
\theoremstyle{remark}
\newtheorem{remark}{Remark}
\numberwithin{equation}{section}
\begin{document}
\title[SPDEs]{Remarks on existence and uniqueness of the solution for stochastic partial differential equations}

\author[B. Avelin]{Benny Avelin}
\address{Uppsala University, Department of Mathematics, Sweden}
\email{benny.avelin@math.uu.se}

\author[L. Viitasaari]{Lauri Viitasaari}
\address{Aalto University School of Business, Department of Information and Service Management, Finland}
\email{lauri.viitasaari@iki.fi}

\begin{abstract}
In this article we consider existence and uniqueness of the solutions to a large class of stochastic partial differential of form
$\partial_t u = L_x u + b(t,u)+\sigma(t,u)\dot{W}$,
driven by a Gaussian noise $\dot{W}$, white in time and spatial correlations given by a generic covariance $\gamma$. We provide natural conditions under which classical Picard iteration procedure provides a unique solution. We illustrate the applicability of our general result by providing several interesting particular choices for the operator $L_x$ under which our existence and uniqueness result hold. In particular, we show that Dalang condition given in \cite{Da} is sufficient in the case of many parabolic and hypoelliptic operators $L_x$. 
\end{abstract}

\maketitle

\medskip\noindent
{\bf Mathematics Subject Classifications (2010)}: 60H15, 60G15, 35C15, 35K58, 35S10.

\medskip\noindent
{\bf Keywords:} Stochastic partial differential equations, existence and uniqueness, mild solution, semilinear parabolic equations, hypoelliptic equations. 

\allowdisplaybreaks

\section{Introduction}
In this article we consider the stochastic partial differential equation (SPDE) of form
\begin{equation}
\label{eq:spde-intro}
    \frac{\partial u}{\partial t} (t,x)= (L_x u)(t,x) + b(t,u(t,x))+ \sigma (t,u(t,x)) \dot{W} (t,x), \hskip0.5cm t\geq 0, x \in \mathbb{R}^d
\end{equation}
with initial condition $u(0,x) = u_0(x)$. Here $b$ and $\sigma$ are assumed to be Lipschitz continuous functions and bounded on compacts in the spatial variable $x$, and uniformly in $t$. The stochastic force $\dot{W}$ is assumed to be Gaussian with correlation structure that is white in the time variable $t$ and given by a generic covariance $\gamma$ in the spatial variable $x$.

SPDEs have been a subject of active research in the literature for recent years, and the basic theory is already rather well-established. Especially, initiated by the seminal paper by Dalang \cite{Da}, stochastic heat equations, in which case $L_x = \Delta$ is the Laplace operator, with Gaussian noise that is white in time have received a lot of attention. In this case one obtains the existence and uniqueness of the solution provided that the following so-called Dalang's condition
\begin{equation}
    \label{eq:dalang-heat-intro}
        \int_{\mathbb{R}^d} \frac{1}{\beta+2|\xi|^2}\hat{\gamma}(d\xi) < \infty
\end{equation}
holds for some $\beta_0>0$ (in which case it holds for all $\beta>\beta_0$). Here $\hat{\gamma}$ denotes the non-negative measure arising as the Fourier transform of the spatial covariance $\gamma$ of the noise $\dot{W}$. One also observes that here the term $|\xi|^2$ corresponds to the Fourier multiplier arising from the Laplace operator $\Delta$. This leads to a natural extension to the case where $L_x$ is the $L^2$-generator of a Levy process. In this case \cref{eq:dalang-heat-intro} is replaced by 
\begin{equation}
    \label{eq:dalang-levy-intro}
        \int_{\mathbb{R}^d} \frac{1}{\beta+2\text{Re}\Psi(\xi)}\hat{\gamma}(d\xi) < \infty,
\end{equation}
where $\Psi(\xi)$ arises from the characteristic exponent of the associated Levy process. Here $\text{Re}\Psi(\xi)$ is again a non-negative function, and if \cref{eq:dalang-levy-intro} holds for some $\beta>0$, then it holds for all $\beta>0$. As a particular interesting example, this covers the case of the stochastic fractional heat equation where $L_x$ is given by the fractional Laplace operator $-(-\Delta)^\alpha$, $\alpha\in(0,1]$. Indeed, then $\Psi(\xi) = |\xi|^{2\alpha}$ and one recovers the classical heat equation and condition \cref{eq:dalang-heat-intro} by plugging in $\alpha = 1$. This generalisation is studied, among others, in \cite{Foo-Kho2013} where the existence and uniqueness result was given under suitable assumptions on the coefficients $b,\sigma$. 

A standard technique to prove existence and uniqueness of the solution to \cref{eq:spde-intro} is based on the so-called fundamental solution $G_t(x)$ (or the Green kernel) associated to the equation $\partial_t u = L_x u$. Then one obtains a candidate for the solution through convolutions with the kernel $G_t(x)$. In particular, in the settings mentioned above one obtains that $G_t$ is the density of the underlying Levy process and forms a semigroup. After that, using the positivity of $G_t$ as well, one obtains the solution through Picard iteration. 

In this article we provide a general existence and uniqueness result for the solution to \cref{eq:spde-intro}. Our condition, given by Equation \cref{eq:dalang} below, is similar to \cref{eq:dalang-levy-intro}. However, in our results we do not require non-negativity of the fundamental solutions $G_t$ as we only consider upper bounds for $|G_t|$. Furthermore, we do not require the associated function, given by $\Psi(\xi)$ in \cref{eq:dalang-levy-intro}, to be non-negative. This fact allows to consider a larger class of operators $L_x$ in \cref{eq:spde-intro}, in the case where we do not have a Gaussian upper bound on the fundamental solution. 
Our main contribution is that, by a careful analysis on the essential requirements in the classical arguments, we get a remarkably strong generalization. As such, we are able to cover a host of equations which was previously not known in the literature, including the important Kolmogorov equations. 
Finally, we stress that similar considerations can be applied to equations of form \cref{eq:spde-intro}, where $\partial_t$ is replaced with more general operator $L_t$, see \cref{sec:concluding}.

The rest of the paper is organised as follows. In \cref{sec:results} we present our assumptions and provide the existence and uniqueness result, \cref{thm:main}. In \cref{subsec:examples}, we illustrate the applicability of our result by providing a detailed discussion on several interesting examples. The proof of \cref{thm:main} is postponed to \cref{sec:proofs} where we also recall some basic facts on stochastic calculus. We end the paper with some concluding remarks.

\section{General existence and uniqueness result}
\label{sec:results}
We consider the stochastic partial differential equation 
\begin{equation}
    \label{eq:spde}
    \frac{\partial u}{\partial t} (t,x)= (L_x u)(t,x) + b(t,u(t,x))+ \sigma (t,u(t,x)) \dot{W} (t,x), \hskip0.5cm t\geq 0, x \in \mathbb{R}^d
\end{equation}
where $L_x$ is a suitable differential operator (acting on the variable $x$), and coefficients $b$ and $\sigma$ are assumed to be spatially Lipschitz continuous functions, uniformly in $s$ over compacts. That is, for any $T>0$ we have for all $t\in[0,T]$ that
$$
    |b(t,x) - b(t,y)| \leq L_{b,T}|x-y|
$$
and
$$
    |\sigma(t,x) - \sigma(t,y)| \leq L_{\sigma,T} |x-y|.
$$
Note that then the functions $b$ and $\sigma$ are spatially locally bounded, uniformly in $s$ over compacts. That is, 
$$
    \sup_{s\in[0,T],y\in K} \max(|\sigma(s,y)|,b(s,y)|) < \infty
$$
for all compact sets $K \subset \mathbb{R}^d$ and all finite $T>0$.
For the (centered) Gaussian noise $\dot{W}$ we assume that the covariance is given by 
$$
    \E \left[\dot{W}(t,x)\dot{W}(s,y)\right] = \delta_0(t-s)\gamma(x-y),
$$
where $\delta_0$ denotes the Dirac delta and $\gamma$ are non-negative and non-negative definite measures. That is, the noise can be described by a centered Gaussian family with covariance
\begin{equation}
    \label{eq:cov-fourier}
    \E [W(\phi)W(\psi)] = \int_0^\infty \int_{\mathbb{R}^d} \mathcal{F}\phi(\cdot,s)(\xi) \overline{\mathcal{F}(\psi)(\cdot,s)}(\xi)\hat{\gamma}(d\xi)ds,
\end{equation}
where $\hat{\gamma}$ is the spectral measure of $\gamma$, $\mathcal{F}$ denotes the Fourier transform, and $\phi,\psi$ are suitable functions. We provide rigorous treatment on the construction of the Gaussian noise $\dot{W}$ in \cref{subsec:stochastic-calculus} that contains a brief introduction to Gaussian analysis and stochastic integration required for our analysis.

We denote by $G_t(x)$ the fundamental solution (or the Green kernel) associated to the operator $M = \partial_t - L_x$, in the sense that for any "nice enough" intial data $u_0$ we have that
\begin{align*}
    u(x,t) = \int G_{t}(x-y) u_0(y) dy
\end{align*}
satisfies $Mu = 0$ and $u \to u_0$ as $t \to 0$.
% We denote by $G_t(x)$ the fundamental solution (or the Green kernel) associated to the equation \cref{eq:spde}, i.e. $G_t(x)$ is the solution to the deterministic equation 
% \begin{equation}
%     \label{eq:pde-det}
%     \frac{\partial u}{\partial t} = (L_x u)(t,x)
% \end{equation}
% with initial condition $u(0,x) = \delta_0(x)$, where $\delta_0$ denotes the Dirac delta. 

Consider now our original equation \cref{eq:spde}. For the initial condition $u(0,x) = u_0(x)$, we assume that $u_0(x)$ is deterministic and satisfies, for every $T>0$,
\begin{equation}
    \label{eq:initial-bound}
    \sup_{t\in (0,T],x\in \mathbb{R}^d} \int_{\mathbb{R}^d} |G_t(x-y)||u_0(y)|dy < \infty.
\end{equation}

We prove that under certain conditions, equation \cref{eq:spde} admits a unique mild solution in the following sense.
\begin{definition}
    We say that a random field $u(t,x)$, adapted to the filtration generated by $\dot{W}$, is a mild solution to \cref{eq:spde} if for all $(t,x) \in \mathbb{R}_+ \times \mathbb{R}^d$, the process $(s,y) \rightarrow G_{t-s}(x-y)\sigma(u(s,y))\textbf{1}_{[0,t]}(s)$ is integrable and we have
    \begin{eqnarray}
        u(t,x) &=&\int_{\mathbb{R}^d} G_{t}(x-y)u_0(y)dy+ \int_{0} ^ {t} \int_{\mathbb{R}^d}G_{t-s}(x-y)b(s,u(s,y))dyds \nonumber.\\
        &+& \int_{0} ^ {t} \int_{\mathbb{R}^d} G_{t-s} (x-y) \sigma (s,u(s, y)) W (ds, dy).\label{eq:mild}
    \end{eqnarray}
\end{definition}
Here the stochastic integral exists in the sense of Dalang-Walsh, see \cref{subsec:stochastic-calculus}.

The following existence and uniqueness result is the main result of this paper. The proof follows standard arguments and is presented in \cref{sec:proofs}. 
\begin{theorem}
    \label{thm:main}
    Let $G_s$ be the fundamental solution to \cref{eq:spde} and assume that there exists an integrable function $g_s(x) = \mathcal{F}^{-1}(e^{-s \hat f})$, $\hat f \geq -C$ such that $|G_s(x)| \leq g_s(x)$ and there exists $\beta_0>2C$ such that, 
    \begin{equation}
        \label{eq:dalang-deterministic}
        \int_0^\infty e^{-\beta_0 s}\Vert g_s\Vert_{L^1(\mathbb{R}^d)}ds < \infty
    \end{equation}
    and
    \begin{equation}
        \label{eq:dalang}
        \int_{\mathbb{R}^d} \frac{1}{\beta_0 + 2\hat f(\xi)} \hat \gamma (d\xi) < \infty
    \end{equation}
    Then \cref{eq:spde} admits a unique mild solution.
\end{theorem}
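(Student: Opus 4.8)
The plan is to follow the classical Picard iteration scheme, but with the technical heart being a careful estimate of the stochastic integral term using the assumptions \cref{eq:dalang-deterministic} and \cref{eq:dalang}. First I would set up the Picard iterates $u_0(t,x)$ equal to the deterministic first term of \cref{eq:mild}, and then define $u_{n+1}(t,x)$ by inserting $u_n$ into the right-hand side of \cref{eq:mild}. Because $b$ and $\sigma$ are only \emph{locally} bounded (bounded on compacts) rather than globally bounded, I expect a first obstacle already here: one must verify that each iterate is well defined and has finite relevant moments. I would control this by introducing a weighted norm of the form
\begin{equation*}
    \Vert u\Vert_{\lambda,T}^2 = \sup_{t\in[0,T],\, x\in\mathbb{R}^d} e^{-\lambda t}\, \E\lvert u(t,x)\rvert^2,
\end{equation*}
and proving inductively that each iterate lies in the corresponding space, using the Lipschitz bounds to get linear growth $\lvert\sigma(t,y)\rvert \leq C(1+\lvert y\rvert)$ and similarly for $b$.

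The central estimate is the contraction bound on the stochastic integral. By the Dalang--Walsh isometry together with \cref{eq:cov-fourier}, the second moment of the stochastic convolution of a function $\Phi$ is controlled by
\begin{equation*}
    \int_0^t \int_{\mathbb{R}^d} \lvert \mathcal{F}\big(G_{t-s}(x-\cdot)\Phi(s,\cdot)\big)(\xi)\rvert^2 \,\hat\gamma(d\xi)\,ds.
\end{equation*}
The key point is that the Fourier transform turns the convolution structure of $G$ into a multiplier, and the assumption $\lvert G_s(x)\rvert \leq g_s(x)$ with $g_s = \mathcal{F}^{-1}(e^{-s\hat f})$ lets me replace $\lvert\mathcal{F}G_{t-s}(\xi)\rvert$ by $e^{-(t-s)\hat f(\xi)}$. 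After taking the supremum over $x$ and applying the Lipschitz/linear-growth bounds on $\sigma$, the time integral collapses, via the elementary identity $\int_0^\infty e^{-\beta s - 2 s \hat f(\xi)}\,ds = (\beta + 2\hat f(\xi))^{-1}$ (which requires $\beta + 2\hat f(\xi) > 0$, guaranteed since $\hat f \geq -C$ and $\beta > \beta_0 > 2C$), into precisely the finite quantity \cref{eq:dalang}. I would choose $\lambda = \beta$ large enough in the weighted norm so that the resulting contraction constant is strictly less than $1$.

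The hard part is the interplay between the two hypotheses and the sign of $\hat f$. Unlike the classical setting, $\hat f$ need not be non-negative, so I must keep track of the constant $C$ and insist on $\beta_0 > 2C$ throughout: this is exactly what keeps the denominator $\beta_0 + 2\hat f(\xi)$ positive and the time integral convergent. Condition \cref{eq:dalang-deterministic} plays the complementary role of controlling the deterministic drift convolution (involving $b$) and ensuring the $L^1$ mass of $g_s$ is integrable in time against $e^{-\beta_0 s}$, which is what I would use to handle the first two terms of \cref{eq:mild} and to verify \cref{eq:initial-bound}-type bounds for the iterates. Once the weighted-norm contraction estimate holds for both the drift and noise terms, the fixed point follows from completeness of the space, giving existence; the same contraction estimate applied to the difference of two putative solutions yields uniqueness. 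I would close by upgrading to the stated mild-solution sense, checking the integrability of $(s,y)\mapsto G_{t-s}(x-y)\sigma(u(s,y))\mathbf{1}_{[0,t]}(s)$ required in the definition, which again reduces to the finiteness of \cref{eq:dalang}.
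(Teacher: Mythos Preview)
Your overall architecture (Picard iterations, exponentially weighted sup-norm, contraction for large $\beta$) matches the paper's, and your identification of the roles of \cref{eq:dalang-deterministic} and \cref{eq:dalang} is correct. But the mechanism you describe for the central estimate has a genuine gap.

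You write the second moment as
\[
\int_0^t \int_{\mathbb{R}^d} \bigl|\mathcal{F}\bigl(G_{t-s}(x-\cdot)\Phi(s,\cdot)\bigr)(\xi)\bigr|^2\,\hat\gamma(d\xi)\,ds
\]
and then say that the Fourier transform ``turns the convolution structure of $G$ into a multiplier'' and that $|G_s|\le g_s$ lets you ``replace $|\mathcal{F}G_{t-s}(\xi)|$ by $e^{-(t-s)\hat f(\xi)}$''. Neither step is valid as stated. First, $G_{t-s}(x-\cdot)\Phi(s,\cdot)$ is a \emph{pointwise product}, not a convolution, so its Fourier transform is a convolution in $\xi$ and there is no multiplier to isolate. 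Second, a pointwise bound in physical space does not transfer to a pointwise bound on Fourier transforms: $|G_s(x)|\le g_s(x)$ gives no control on $|\hat G_s(\xi)|$ whatsoever.

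The correct order of operations, which the paper follows, is to stay in physical space first: use the isometry in the form
\[
\int_0^t\!\!\int_{\mathbb{R}^d}\!\!\int_{\mathbb{R}^d} G_{t-s}(x-y)G_{t-s}(x-y')\,\Phi(s,y)\Phi(s,y')\,\gamma(y-y')\,dy\,dy'\,ds,
\]
bound $|G_{t-s}|\le g_{t-s}$ pointwise, and then pull out $\sup_{y}\|\Phi(s,y)\|$ (via H\"older/Minkowski). Only \emph{after} this does Fourier enter, applied to the deterministic remainder
\[
\int_{\mathbb{R}^d}\!\!\int_{\mathbb{R}^d} g_{t-s}(x-y)g_{t-s}(x-y')\gamma(y-y')\,dy\,dy' = \int_{\mathbb{R}^d} |\hat g_{t-s}(\xi)|^2\,\hat\gamma(d\xi) = \int_{\mathbb{R}^d} e^{-2(t-s)\hat f(\xi)}\,\hat\gamma(d\xi),
\]
which then feeds into your time-integral identity and \cref{eq:dalang}. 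With this correction your contraction argument goes through; the paper packages the same estimate as the quantity $I(s)$ in \cref{eq:I} and, instead of a direct fixed-point, sums the increments $H_n(t)=\sup_x e^{-\beta t}\|u_{n+1}-u_n\|_p$ via a Gronwall-type recursion lemma (\cref{prop:key}, borrowed from Dalang). The two closing arguments are equivalent once one knows $\Upsilon(\beta)\to 0$ as $\beta\to\infty$.
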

\begin{remark}
    \label{remark:b-condition}
    Observe that the condition \cref{eq:dalang-deterministic} is independent of the chosen covariance $\gamma$, and is rather mild. Indeed, in many examples $g_s$ is given by a density of some generating stochastic process at time $s$, and hence we have $\Vert g_s\Vert_{L^1(\mathbb{R}^d)} = 1$ for all $s>0$. Moreover, by carefully examining our proof one observes that \cref{eq:dalang-deterministic} can be replaced with a weaker condition $\int_0^T \Vert g_s\Vert_{L^1(\mathbb{R}^d)}ds < \infty$ for all $T>0$ finite. Finally, we note that \cref{eq:dalang-deterministic} can be omitted in the case $b\equiv 0$. 
\end{remark}
\begin{remark}
    The proof of \cref{thm:main} actually gives more, however for purposes of exposition we chose to present the simplified form \cref{eq:dalang}. For a discussion about further cases (including for instance the wave equation) see, \cref{sec:concluding}.
\end{remark}
\begin{remark}
    Note that if \cref{eq:dalang} is satisfied for some $\beta_0>0$, then it is automatically valid for all $\beta>\beta_0$ as well. 
\end{remark}
\subsection{Examples}
\label{subsec:examples}
In this section we outline some examples of quite general linear equations for which \cref{thm:main} can be applied.
\subsubsection{Linear operators with Gaussian upper bounds}
\label{sssec:Gaussian}
Many parabolic and hypoelliptic evolution operators satisfy a Gaussian upper bound for their fundamental solution, namely an estimate of the following type
\begin{align} \label{eq:Gaussian}
    G_s(x) \leq g(t)e^{-\frac{C |x|_{\mathbb{G}}^2}{t}}
\end{align}
where $\mathbb{G}$ is a homogeneous Lie group. The reason for writing it as above is that if we consider $|x|_{\mathbb{G}}= |x|$, being the standard Euclidean norm, we can cover a big class of parabolic operators. But for hypoelliptic equations, we get a nontrivial norm induced by the corresponding Lie-group.

Let us now consider some interesting examples satisfying \cref{eq:Gaussian} with the Euclidean norm.

Let $L$ in $\R^n$ be given as
\begin{align*}
    L = \sum_{i,j=1}^n a_{ij}(x,t) \partial_{ij} + \sum_{j=1}^n b_i(x,t) \partial_i + c(x,t)
\end{align*}
then if the matrix $a_{ij}(x,t)$ is uniformly elliptic, the functions $a,b,c$ are bounded and uniformly Hölder continuous with exponent $\alpha$, then there exists positive constants $c_1,c_2$ such that
\begin{align}
    \label{eq:L:gaussian}
    |G_t(x)| \leq c_1 t^{-n/2} e^{-c_2 \frac{|x|^2}{t}}
\end{align}
where $G_t(x)$ is the fundamental solution to $\partial_t - L$, see \cite{Friedman1,Friedman2}. As such \cref{thm:main} is applicable, and since the Fourier transform of a Gaussian is a Gaussian we get that if
\begin{align} \label{eq:dalang:gaussian}
    \int_{\R^d} \frac{1}{\beta_0 + 2C(c_1)|\xi|^2} \hat \gamma(d \xi) < \infty
\end{align}
 then there exists a mild solution to \cref{eq:spde}. That is, the condition reduces to the standard one for the heat equation, \cite{Da}.
For instance, if $\gamma$ is given by the Riesz kernel, i.e. $\hat \gamma = |\xi|^{\lambda-d} d\xi$, then the above is verified as long as $\lambda < 2$.
In the case when $\gamma$ is in $W^{k,1}(\R^n)$ then
\begin{align*}
    |\hat \gamma| \leq \frac{C}{(1+|\xi|)^k},
\end{align*}
from this we see that \cref{eq:dalang:gaussian} is verified if $k > n-2$. Finally we remark that if $\gamma = \delta_0$ (white noise), then \cref{eq:dalang:gaussian} only holds in dimension 1.

In fact the above can be extended as follows, if we instead consider the divergence form operator
\begin{align*}
    L = \sum_{i,j=1}^n \partial_i (a_{ij}(x,t) \partial_{j} \cdot ),
\end{align*}
with $a_{ij}$ still an elliptic matrix but now only bounded and measurable, then $\partial_t-L$, Aronson \cite{Aronson} proved that \cref{eq:L:gaussian} holds also in this case.

The case for non-divergence form equations with rough coefficients was treated in \cite{Escauriaza}.

Let us now consider the hypoelliptic setting and consider some relevant examples. Let us begin with some notation. We consider the hypoelliptic evolution operator
\begin{align*}
    M = \sum_{i = 1}^m X_i^2 + X_0 - \partial_t,
\end{align*}
where $X_i$ are smooth vector fields in $\R^n$ for $i=0,\ldots,m$, and usually $m < n$. Such an equation induces an interesting geometry. Namely, let us denote
\begin{align*}
    Y = X_0 - \partial_t, \quad \text{and} \quad \lambda \cdot X = \lambda_1 X_1 + \ldots \lambda_m X_m
\end{align*}
then a curve in $\gamma: [0,R] \to \R^n \times [0,T]$ is $M$-admissible if it is absolutely continuous and
\begin{align*}
    \gamma'(s) = \lambda(s) \cdot X(\gamma(s)) + Y(\gamma(s)), \quad \text{a.e. in [0,R]}.
\end{align*}
That is, the curve has a direction at each point in the tangent-bundle described by the vector-fields $X,Y$. If we assume that the vector fields are such that 
\begin{itemize}
    \item every two points $(x,t), (\xi,\tau) \in \R^n \times [0,T]$ can be connected with an $M$ admissible curve,
    \item there exists a homogeneous Lie group $\mathbb{G} = (\R^{n} \times [0,T], \circ, \delta_\lambda)$ for which $X,Y$ are left translation invariant. Furthermore, that $X$ is $\delta_\lambda$-homogeneous of degree 1 and $Y$ is $\delta_\lambda$-homogeneous of degree 2.
\end{itemize}
Then the operator $M$ is hypoelliptic (distributional solutions are smooth), see \cite{Lanconelli,Kogoj}.
This allows us to write $\R^n = V_1 \oplus \ldots \oplus V_l$, i.e. the vector field $X$ stratifies $\R^n$ into a direct sum of subspaces, such that if we write $x = x^{(1)} + \ldots + x^{(l)}$, where $x^{(k)} \in V_k$, then the dilations becomes simple multiplication in the sense that
\begin{align*}
    \delta_\lambda(x,t) = (\lambda x^{(1)} + \ldots + \lambda^l x^{(l)}, \lambda^2 t).
\end{align*}
Furthermore if we introduce the $\delta_\lambda$ homogeneous norm as
\begin{align*}
    |x|_{\mathbb{G}} = \max \left \{ |x_i^{(k)}|^{\frac{1}{k}}, k=1,\ldots, l, i=1,\ldots, m_k \right \}
\end{align*}
Then, for such operators $M$ we have that there exists (see \cite{Kogoj}) a positive constant $C$ such that
\begin{align*}
    |G_t(x)| \leq \frac{C}{t^{\frac{Q-2}{2}}}e^{-\frac{|x|_{\mathbb{G}}^2}{C t}}.
\end{align*}
From this we see that we can apply \cref{thm:main} again in this context, however now $\gamma$ needs to be adapted to the geometry of $\mathbb{G}$. Furthermore, for certain hypoelliptic operators we even have positivity of the fundamental solution, \cite{A1}.

A prototypical example of a hypoelliptic operator is the following (well known Kolmogorov operator)
\begin{align*}
    K = \sum_{j=1}^n \partial_{x_j}^2 + \sum_{j=1}^n x_j \partial_{y_j} - \partial_t
\end{align*}
where we are working in $\R^{2n} \times [0,T]$ and we denote the first $n$ coordinates as $x_i$ and the other $n$ as $y_i$. The operator $K$ is hypoelliptic in the above sense \cite{Lanconelli}. Furthermore, the fundamental solution for $n=1$ is given by
\begin{align*}
    G_t(x,y) = \frac{\sqrt{3}}{2 \pi t^2} \exp \left (-\frac{x^2}{t} - \frac{3xy}{t^2} - \frac{3 y^2}{t^3} \right )
\end{align*}
Considering the above "heat type" kernel in the finite interval $[0,T]$, it is clear that we can bound it from above by $g_t(x) = \frac{C}{t^2} \exp \left (-\frac{1}{C}\frac{x^2+y^2}{t}\right )$ for a positive constant $C$. As such the same conclusion as in \cref{sssec:Gaussian} holds.

Operators of this type often appear in the context of SDE's where the noise is only in some directions, like the kinetic equations, for which the density satisfies the kinetic Fokker-Planck equation. The kinetic Fokker-Planck is hypoelliptic, see \cite{Villani} and the references therein.

\subsubsection{Fractional heat equation}
\label{sssec:fractional}
Consider the fractional evolution equation defined as
\begin{align*}
    \partial_t u = -(-\Delta)^s u + Cu, \quad 0 < s \leq 1,
\end{align*}
where $C = 0$ corresponds to the fractional heat equation. The fourier transform of said equation is
\begin{align*}
    \partial_t \hat u = -|\xi|^{2s} \hat u + C \hat u.
\end{align*}
As such, the fundamental solution is given by $\hat{G}_t(\xi) = e^{-(|\xi|^{2s} - C)t}$, and we can thus apply \cref{thm:main} with $b \equiv 0$ and $\hat f \geq -C$, together with \cref{remark:b-condition}.

% For $C=0$, the fundamental solution is bounded above and below as
% \begin{align*}
%     G_s(x) \eqsim \frac{t^{-\frac{d}{2s}}}{(1+|t^{-\frac{1}{2s}}x|)^{d+2s}}
% \end{align*}
% see \cite{CD,DD}. Note that in this case we can take $g_s(x) := G_s(x)$ and apply \cref{thm:main} to get that for covariance functions $\gamma$ satisfying
% \begin{align*}
%     \int_{\R^d} \frac{1}{\beta + 2 |\xi|^{2s}} \hat \gamma(d \xi) < \infty,
% \end{align*}
% there exists a unique mild solution to \cref{eq:spde}. 

\subsubsection{Mixture operators}
\label{sssec:mixture}
We formally consider an equation of the following type
\begin{align*}
    \partial_t u = L_1 u + L_2 u.
\end{align*}
If $G_1,G_2$ are the fundamental solutions of
\begin{align*}
    \partial_t u = L_1 u, \quad \partial_t u = L_2 u,
\end{align*}
respectively, then $G = G_1 \ast G_2$ is a fundamental solution of the above, (under some integrability assumptions). Indeed, we have
\begin{align*}
    \partial_t (G_1 \ast G_2) &= (\partial_t G_1 \ast G_2) + (G_1 \ast \partial_t G_2)= (L_1 G_1) \ast G_2 + G_1 \ast (L_2 G_2) \\
    &= (L_1 + L_2) (G_1 \ast G_2).
\end{align*}

Thus for instance we can combine \cref{sssec:Gaussian,sssec:fractional} and apply \cref{thm:main} to get the existence of a mild solution to \cref{eq:spde}.

\section{Proof of \cref{thm:main}}
\label{sec:proofs}
\subsection{Preliminaries on stochastic calculus}
\label{subsec:stochastic-calculus}
In this section we introduce stochastic analysis with respect to the noise $\dot{W}$.

Denote by $C_{c}^{\infty}\left( [0, \infty) \times \mathbb{R}^d\right)$ the class of $C^{\infty}$ functions on $ [0, \infty) \times \mathbb{R}^d$ with compact support.  We consider a Gaussian family of centered random variables  
\[
\left( W(\varphi), \varphi \in C^{\infty}_{c} \left( [0, \infty) \times \mathbb{R}^d\right)\right)
\]
on some complete probability space $\left(\Omega, \mathcal{F}, P\right) $ such that 
\begin{eqnarray}
&&\E [W(\varphi) W (\psi) ] \nonumber \\
&=& \int_{0} ^{\infty}  \int_{\mathbb{R}^d} \int_{\mathbb{R}^d}\varphi(s, y) \psi (s, y') \gamma(y-y')dy dy'ds := \langle \varphi, \psi \rangle _{\HH}.\label{cov-col}
\end{eqnarray}
By taking the Fourier transform, this can equivalently be written as \cref{eq:cov-fourier}. 
Note that here $\gamma$ is not a function, and hence \cref{cov-col} should be understood as 
\begin{equation}
\label{cov-col-general}
\E [W(\varphi) W (\psi) ]
= \int_{0}^{\infty} \int_{\mathbb{R}^d} \varphi(s, y) \left[\psi(s,\cdot)\ast \eta\right](y) dy ds,
\end{equation}
where $\ast$ denotes the convolution. 
For the simplicity of our presentation, we use notation $\gamma(y-y')dydy'$ in \cref{subsec:auxiliary-results,subsec:main-proof} from which change of variable transformations are easier to follow. 

We denote by $\HH $ the Hilbert space defined as the closure of $C_{c}^{\infty}\left( [0, \infty) \times \mathbb{R}^d\right)$ with respect to the inner product (\cref{cov-col}). As a result, we obtain an isonormal process $(W(\varphi), \varphi \in \HH )$, which consists of  a Gaussian family of centered random variable such that, for every $\varphi, \psi \in \HH$,
\begin{equation*}
\E[W(\varphi) W (\Psi) ]= \langle \varphi, \psi \rangle _{\HH}.
\end{equation*}
The filtration $\mathbb{F} = (\mathcal{F}_t)_{t\geq 0}$ associated to the random noise $W$ is generated by random variables $W(\varphi)$ for which $\varphi \in \HH$ has support contained in $[0,t] \times \mathbb{R}^d$. 

Let us now define the stochastic integral with respect to $\dot{W}$. For every random field $\{X(s,y),s\geq 0,y\in \mathbb{R}^d\}$ such that 
$$
\E\Vert X\Vert_{\HH}^2  = \E \int_0^\infty \int_{\mathbb{R}^d} X(s,y)X(s,y')\gamma(y-y')dydy' < \infty,
$$
we can define the stochastic integral
$$
\int_0^\infty \int_{\mathbb{R}^d} X(s,y)W(ds,dy)
$$
in the sense of Dalang-Walsh (see, e.g. \cite{Da,Walsh}). It follows that we have the Isometry 
\begin{equation}
\label{eq:isometry}
\E \left[ \int_0^\infty \int_{\mathbb{R}^d} X_n(s,y) W(ds,dy) \right]^2 = \E\Vert X_n\Vert_{\HH}.
\end{equation}
Moreover, we have the following version of the Burkholder-Davis-Gundy inequality: for any $t\geq 0$ and $p\geq 2$,
\begin{eqnarray}
&&\left| \left|  \int_{0} ^{\infty} \int_{\mathbb{R}^d} X(s, y) W (ds, dy)\right|\right|  _{p} ^{2}\nonumber \\
&&\leq c_{p} \int_{0} ^{t} \int_{\mathbb{R}^d}\int_{\mathbb{R}^d }\Vert X(s, y) X(s, y')\Vert  _{\frac{p}{2}} \gamma(y-y')dydy'ds. \label{burk2}
\end{eqnarray}

\subsection{Auxiliary results}
\label{subsec:auxiliary-results}
Set 
\begin{equation}
\label{eq:I}
I(s) = \Vert g_s\Vert_{L^1(\mathbb{R}^d)} + \int_{\mathbb{R}^d} \int_{\mathbb{R}^d} g_s(y)g_s(y')\gamma(y-y')dydy'.
\end{equation}
By taking the Fourier transform, we have
\begin{equation*}
I(s) = \Vert g_s\Vert_{L^1(\mathbb{R}^d)}+\int_{\mathbb{R}^d}|\hat{g}_s(\xi)|^2\hat{\gamma}(d\xi) = \Vert g_s\Vert_{L^1(\mathbb{R}^d)}+ \int_{\mathbb{R}^d}e^{-2s\hat{f}(\xi)} \widehat{\gamma}(d\xi).
\end{equation*}
\begin{lemma}
Suppose that \cref{eq:dalang-deterministic}-\cref{eq:dalang} hold for some $\beta_0$ and set
$$
\Upsilon(\beta) = \int_0^\infty e^{-\beta s}I(s)ds.
$$
Then $\Upsilon: (\beta_0,\infty) \rightarrow (0,\infty)$ is well-defined and non-increasing in $\beta$. Moreover, $\lim_{\beta\to \infty}\Upsilon(\beta) = 0$.
\end{lemma}
\begin{proof}
By Tonelli's theorem, we have
\begin{equation*}
  \int_0^\infty e^{-\beta s}\int_{\mathbb{R}^d}e^{-2s\hat{f}(\xi)} \widehat{\gamma}(d\xi)ds = \int_{\mathbb{R}^d}\int_0^\infty e^{-\beta s-2s\hat{f}(\xi)}ds \hat{\gamma}(d\xi)=\int_{\mathbb{R}^d}\frac{1}{\beta+2\hat{f}(\xi)}\hat{\gamma}(d\xi)
\end{equation*}
for every $\beta$ such that $\beta+2\hat{f}(\xi)>0$. Since $I(s)$ is non-negative, it is clear that $\Upsilon(\beta)$ is non-increasing which concludes the proof.
\end{proof}
\begin{remark}
We remark that for our purposes, it would actually suffice to consider $$
\Upsilon_T(\beta) = \int_0^T e^{-\beta s}I(s)ds
$$
for each fixed $T<\infty$. Hence we could replace the condition \cref{eq:dalang-deterministic} with $\int_0^T \Vert g_s\Vert_{L^1(\mathbb{R}^d)}ds < \infty$, cf. \cref{remark:b-condition}.
\end{remark}
The following Proposition is the main technical ingredient. The result follows directly from \cite[Lemma 15]{Da} adapted to our context.
\begin{proposition}
\label{prop:key}
Let $I(s)$ be given by \cref{eq:I} and suppose that \cref{eq:dalang-deterministic}-\cref{eq:dalang} hold. Let $\iota>0$, $\beta>\beta_0$, and $T\in(0,\infty)$ be fixed, and let $h_n$ be a sequence of non-negative functions such that $\sup_{t\in[0,T]} h_0(t)< \infty $ and, for $n\geq 1$, we have
\begin{equation}
\label{eq:recursion}
h_{n}(t) \leq \iota\int_0^t  h_{n-1}(s)e^{-\beta(t-s)}I(t-s)ds.
\end{equation}
Then the series
\begin{equation}
\label{eq:lp-convergence}
H(\iota,p,t) :=\sum_{n\geq 0} h_n(t)
\end{equation}
converges uniformly in $t\in[0,T]$.
\end{proposition}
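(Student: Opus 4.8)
The plan is to recognise the recursion \cref{eq:recursion} as an iterated convolution against the kernel $K(u) := e^{-\beta u} I(u)$ and to run the classical Picard contraction argument in a suitably weighted supremum norm. Writing $e^{-\beta(t-s)} I(t-s) = K(t-s)$, the inequality \cref{eq:recursion} reads $h_n \le \iota\,(h_{n-1} \ast K)$ on $[0,T]$, and by the preceding Lemma we already know $\int_0^\infty K(u)\,du = \Upsilon(\beta) < \infty$. A naive bound on $\sup_{[0,T]} h_n$ yields only $\sup_{[0,T]} h_n \le \iota\,\Upsilon(\beta)\,\sup_{[0,T]} h_{n-1}$, which is useless unless $\iota\,\Upsilon(\beta) < 1$; since $\beta > \beta_0$ is fixed, there is no reason for this to hold. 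This is the main obstacle, and I would resolve it by extracting the extra decay hidden in the convolution through an exponential tilt.

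Concretely, I would fix a parameter $\mu > 0$ and work with the weighted norm $\Vert h\Vert_\mu := \sup_{t\in[0,T]} e^{-\mu t} h(t)$. Multiplying \cref{eq:recursion} by $e^{-\mu t}$ and splitting $e^{-\mu t} = e^{-\mu s} e^{-\mu(t-s)}$ inside the integral gives
$$
e^{-\mu t} h_n(t) \le \iota \int_0^t \bigl(e^{-\mu s} h_{n-1}(s)\bigr)\, e^{-(\beta+\mu)(t-s)} I(t-s)\,ds \le \iota\,\Vert h_{n-1}\Vert_\mu \int_0^\infty e^{-(\beta+\mu)u} I(u)\,du.
$$
The last integral is exactly $\Upsilon(\beta+\mu)$, so taking the supremum over $t\in[0,T]$ I obtain the clean one-step estimate $\Vert h_n\Vert_\mu \le \iota\,\Upsilon(\beta+\mu)\,\Vert h_{n-1}\Vert_\mu$ for every $n \ge 1$.

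Now I would invoke the Lemma once more: since $\beta + \mu > \beta_0$ and $\Upsilon$ is non-increasing with $\lim_{\beta\to\infty}\Upsilon(\beta) = 0$, I may choose $\mu$ large enough that $q := \iota\,\Upsilon(\beta+\mu) < 1$. Iterating the one-step estimate then yields $\Vert h_n\Vert_\mu \le q^n \Vert h_0\Vert_\mu$, and $\Vert h_0\Vert_\mu \le \sup_{t\in[0,T]} h_0(t) < \infty$ by hypothesis. Hence $\sum_{n\ge 0} \Vert h_n\Vert_\mu \le (1-q)^{-1}\Vert h_0\Vert_\mu < \infty$, so by the Weierstrass $M$-test the series $\sum_{n\ge 0} e^{-\mu t} h_n(t)$ converges uniformly on $[0,T]$. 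Finally, since $e^{-\mu t} \ge e^{-\mu T} > 0$ on the compact interval $[0,T]$, multiplying back by the bounded factor $e^{\mu t}$ preserves uniform convergence, giving the claimed uniform convergence of $H(\iota,p,t) = \sum_{n\ge 0} h_n(t)$ on $[0,T]$. The only genuinely delicate point, as noted, is the passage from the fixed $\beta$ to the shifted parameter $\beta + \mu$ via the weight, which is precisely what converts the non-contractive geometric bound into a summable one.
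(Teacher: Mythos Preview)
Your argument is correct. The exponential tilt that converts the recursion into a genuine contraction in the weighted sup-norm $\Vert\cdot\Vert_\mu$ is exactly the right idea, and every step checks out: the splitting $e^{-\mu t}=e^{-\mu s}e^{-\mu(t-s)}$, the identification of the resulting integral with $\Upsilon(\beta+\mu)$, the appeal to the preceding Lemma for $\Upsilon(\beta+\mu)\to 0$, and the final unweighting on the compact interval $[0,T]$.

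As for comparison: the paper does not actually prove this proposition but simply defers to \cite[Lemma~15]{Da}, stating that the result ``follows directly'' from it ``adapted to our context.'' What you have written is, in effect, a clean self-contained proof of that lemma in the present setting---Dalang's original argument is the same exponential-weight/Laplace-transform contraction, so your approach is not genuinely different from the one implicitly invoked, just made explicit. If anything, your write-up is more informative than the paper's one-line citation, since it isolates precisely where each hypothesis (finiteness of $\Upsilon$, monotonicity, decay to zero) enters.
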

\subsection{Proof of \cref{thm:main}}
\label{subsec:main-proof}
\begin{proof}[Proof of \cref{thm:main}] 
Let $T>0$ be fixed and finite. We consider the standard Picard iterations by setting $u_{0}(t,x)=\int_{\mathbb{R}^d} G_t(x-y)u_0(y)dy$ and, for $n\geq 1$ and a given $\beta>\beta_0$,
\begin{equation*}
\begin{split}
e^{-\beta t} u_{n+1} (t,x)&=e^{-\beta t}u_{0}(t,x)+ \int_0^t e^{-\beta t}\int_{\mathbb{R}^d} G_{t-s}(x-y)b(s,u_n(s,y))dyds\\
&+\int_{0} ^ {t}e^{-\beta t}\int_{\mathbb{R}^{d}} G_{t-s}(x -y ) \sigma (s,u_{n}(s, y))W (ds, dy), \hskip0.5cm t\geq 0, x\in \mathbb{R}^{d}.
\end{split}
\end{equation*} 
We first prove that, for each $n\geq 1$, $u_{n}(t,x)$ is well-defined and satisfies 
\begin{equation}\label{eq:sup-exp-norm-n}
\sup_{t\in (0,T]}\sup_{x\in \mathbb{R}^d}  \E\left[e^{-p\beta t} \left| u_{n}(t,x) \right| ^ {p} \right]<\infty.
\end{equation}
We first note that if \cref{eq:sup-exp-norm-n} holds for some $n\geq 0$, it follows 
from the Lipschitz continuity of $\sigma$ that 
\begin{equation*}
    \begin{split}
        \E \left[e^{-p\beta s}\vert  \sigma(s,u_{n}(s,y)) \vert ^p\right] &\leq C \left[\E \left[e^{-p\beta s}\vert  \sigma(s,u_{0}(s,y)) \vert ^p\right]\right. \\
        &+\left. \E \left[e^{-p\beta s}\vert  u_{n}(s,y))-u_0(s,y) \vert ^p\right]\right].
    \end{split}
\end{equation*}
By the boundedness of $\sigma$ on compacts and \cref{eq:initial-bound}, we get
$$
\sup_{s\in[0,T],y\in\mathbb{R}^d} \E \left[e^{-p\beta s}\vert  \sigma(s,u_{0}(s,y)) \vert ^p\right] = \sup_{s\in[0,T],y\in\mathbb{R}^d} e^{-p\beta s}\vert  \sigma(s,u_{0}(s,y)) \vert ^p< \infty
$$
and consequently, 
$$
\E \left[e^{-p\beta s}\vert  \sigma (s,u_{n}(s,y)) \vert ^p\right] \leq    C\left(1+ \sup_{s\in [0,T],y\in \mathbb{R}^d} \E \left[e^{-p\beta s}\vert u_{n}(s, y)\vert ^p  \right]  \right) <\infty.
$$ 
This in turn gives us, by H\"older's inequality, that
$$
\sup_{s\in [0,T]}\sup_{y,y'\in \RR^d} e^{-\beta s}\left| \left| \sigma(s,u_{n}(s, y)) \sigma (s,u_{n}(s, y'))\right| \right| _{\frac{p}{2}} < \infty.
$$
Exactly the same way, we obtain 
$$
\sup_{s\in [0,T]}\sup_{y\in \RR^d}\E \left[e^{-p\beta s}\vert  b (s,u_{n}(s,y)) \vert ^p\right] < \infty.
$$
In view of \cref{eq:isometry} and \cref{burk2} together with the boundedness of $u_0(t,x)$ and $|G_t(y)|\leq g_t(y)$, applying the Minkowski integral inequality leads to  
\begin{eqnarray*}
&&\E \left[e^{-p\beta t}\left|u_{n+1}(t,x)\right|^p \right] \\
&\leq &  C\left( \left[e^{-\beta t}u_0(t,x)\right]^p+   \Big\| \int_{0} ^{t} e^{-\beta t} \int_{\mathbb{R}^d} G_{t-s}(x-y) b(s,u_n(s,y))dy ds \Big\|_p^p\right. \\
&+& \left. \Big\| \int_0^t e^{-\beta t}\int_{\mathbb{R}^{d}} \int_{\mathbb{R}^{d}}G_{t-s}(x -y )G_{t-s} (x -y' ) \right.\\
&&\phantom{kukkuu}\left.\times\sigma(s,u_{n}(s, y)) \sigma (s,u_{n}(s, y')) \gamma(y-y ')dy'dyds\Big\| _{\frac{p}{2}}^{\frac{p}{2}}\right)\\
&\leq &  C\left[ 1+   \left(\int_{0} ^{t} e^{-\beta (t-s)} \int_{\mathbb{R}^d} g_{t-s}(x-y) e^{-\beta s}\Vert b(s,u_n(s,y))\Vert_{p}dyds\right)^{p}  \right. \\
&+& \left. \left( \int_0^t e^{-\beta (t-s)}\int_{\mathbb{R}^{d}} \int_{\mathbb{R}^{d}}g_{t-s} (x -y )g_{t-s} (x -y' )\right.\right.\\
&&\phantom{kukkuu}\left.\left.\times
e^{-\beta s}\left| \left| \sigma(s,u_{n}(s, y)) \sigma (s,u_{n}(s, y'))\right| \right| _{\frac{p}{2}}\gamma(y-y ')dy'dyds\right) ^{\frac{p}{2}}\right]\\
&\leq &  C \left[ 1+ \left(\int_{0}^{t} e^{-\beta(t-s)}I (t-s) ds \right)^{p}+\left(\int_{0}^{t} e^{-\beta(t-s)}I (t-s) ds \right)^{\frac{p}{2}}\right] \\
& \leq & C \left(1+ \Upsilon^p(\beta)+\Upsilon^{\frac{p}{2}}(\beta)\right).
\end{eqnarray*}
which is finite for $\beta>\beta_0$. Since $u_0(t,x)$ is uniformly bounded on $[0,T] \times \mathbb{R}^d$, \cref{eq:sup-exp-norm-n} for all $n\geq 0$ thus follows from induction and hence $u_{n+1}$ is well-defined. The same arguments applied to 
$$
H_{n}(t):= \sup_{x \in \mathbb{R}^{d}} \left[e^{-\beta pt}\E \left[\left| u_{n+1} (t, x)- u_{n}(t,x) \right| ^ {p}\right]\right]^{\frac{1}{p}}
$$
gives us
\begin{align*}
H_n(t)&\leq  C \left[ \int_0^t e^{-\beta(t-s)}\int_{\mathbb{R}^d}g_{t-s}(x-y)H_{n-1}(s)dyds\right.  \\
&+\left( \int_{0} ^ {t} \int_{\mathbb{R}^{d}  }  \int_{\mathbb{R}^{d}  } e^{-2\beta t} g_{t-s} (x -y ) g_{t-s}(x -y' )\left| \left| \sigma (s,u_{n}(s, y))-  \sigma (s,u_{n-1}(s, y))\right|\right| _{p}\right.\\
&\phantom{kukkuu}\times\left.\left.
  \left|  \left| \sigma (s,u_{n}(s, y'))-  \sigma (s,u_{n-1}(s, y'))\right|\right|_{p} \gamma(y-y')dy'dyds\right) ^ {\frac{1}{2}}\right]\\
  &\leq  C \left[ \int_0^t e^{-\beta(t-s)}I(t-s)H_{n-1}(s)ds+\left( \int_{0} ^ {t} e^{-2\beta (t-s)}I(t-s)H^2_{n-1}(s)ds\right) ^ {\frac{1}{2}}\right].
\end{align*}
Here H\"older's inequality gives 
$$
\left( \int_{0} ^ {t} e^{-2\beta (t-s)}I(t-s)H^2_{n-1}(s)ds\right) ^ {\frac{1}{2}} \leq C \int_0^t e^{-\beta(t-s)}I(t-s)H_{n-1}(s)ds
$$
leading to
$$
H_n(t) \leq C \int_0^t e^{-\beta(t-s)} I(t-s)H_{n-1}(s)ds.
$$
By noting that $b(s,u_0(s,y))$ and $\sigma(s,u_0(s,y))$ are uniformly bounded over $[0,T]\times \mathbb{R}^d$, we obtain from the above computations that $\sup_{s\in[0,T]} H_0(s) < \infty$. Consequently, it follows from \cref{prop:key} that $\sum_{n\geq 1} H_{n} (t)$ converges uniformly on $[0, T]$, and hence the sequence $u_{n}$ converges in $L^ {p}(\Omega) $ to some process $u \in L^p(\Omega)$, uniformly on $[0, T] \times \mathbb{R}^{d} $. From the uniform convergence one can deduce further that $u$ satisfies \cref{eq:mild}, and thus we have obtained the existence of the mild solution. Proving the uniqueness in a similar fashion and noting that $T>0$ was arbitrary concludes the proof.
\end{proof}

\section{Concluding remarks}
\label{sec:concluding}
In this article we have considered general SPDEs \cref{eq:spde} and showed that one can obtain existence and uniqueness of the mild solution by classical arguments, initiated by Dalang \cite{Da}, for a very large class of operators $L_x$. Our condition \cref{eq:dalang} is 
similar to the Dalang condition \cref{eq:dalang-heat-intro} for the stochastic heat equation and actually, our examples in \cref{subsec:examples} reveals that \cref{eq:dalang-heat-intro} is indeed sufficient for a very large class of operators $L_x$. By carefully examining the above proofs, we observe that even more is true. Indeed, condition \cref{eq:dalang} could be replaced with a condition $|G_s(x)|\leq g_s(x)$ and, for all $T>0$, 
    \begin{equation*}
        \int_0^T \int_{\mathbb{R}^d} e^{-\beta s}[\hat{g}_s(\xi)]^2 \hat \gamma (d\xi) ds < \infty.
    \end{equation*}
This formulation allows to consider even more general operators $L_t$ in the time variable instead of considering merely $L_t = \partial_t$. In particular, with this formulation one can easily recover the existence and uniqueness result of \cite{Da} for the stochastic wave equation (in one or two dimensions) with $L_t = \partial_{tt}$. As another example, we can obtain the existence and uniqueness result in the case of the fractional power of the full heat operator $(\partial_t - \Delta)^s$, studied for instance in \cite{NS}.

\end{document}